\newcommand{\lyxaddress}[1]{
	\par {\raggedright #1
	\vspace{1.4em}
	\noindent\par}
}
\theoremstyle{plain}
\newtheorem{lem}{\protect\lemmaname}
\theoremstyle{plain}
\newtheorem{prop}{\protect\propositionname}
\theoremstyle{plain}
\newtheorem{thm}{\protect\theoremname}
\theoremstyle{remark}
\newtheorem{rem}{\protect\remarkname}
\providecommand{\lemmaname}{Lemma}
\providecommand{\propositionname}{Proposition}
\providecommand{\remarkname}{Remark}
\providecommand{\theoremname}{Theorem}
\begin{document}
\title{Concentration-based confidence intervals for U-statistics}
\author{Hien D. Nguyen}
\maketitle

\lyxaddress{Department of Mathematics and Statistics, La Trobe University, Melbourne
Victoria, Australia. Email: h.nguyen5@latrobe.edu.au.}
\begin{abstract}
Concentration inequalities have become increasingly popular in machine
learning, probability, and statistical research. Using concentration
inequalities, one can construct confidence intervals (CIs) for many
quantities of interest. Unfortunately, many of these CIs require the
knowledge of population variances, which are generally unknown, making
these CIs impractical for numerical application. However, recent results
regarding the simultaneous bounding of the probabilities of quantities
of interest and their variances have permitted the construction of
empirical CIs, where variances are replaced by their sample estimators.
Among these new results are two-sided empirical CIs for U-statistics,
which are useful for the construction of CIs for a rich class of parameters.
In this article, we derive a number of new one-sided empirical CIs
for U-statistics and their variances. We show that our one-sided CIs
can be used to construct tighter two-sided CIs for U-statistics, than
those currently reported. We also demonstrate how our CIs can be used
to construct new empirical CIs for the mean, which provide tighter
bounds than currently known CIs for the same number of observations,
under various settings.
\end{abstract}
\textbf{Key words:} Bernstein inequality; concentration inequalities;
confidence intervals; sample variance; Hoeffding inequality; U-statistics

\section{Introduction}

Let $\bm{X}_{1},\dots,\bm{X}_{n}\in\mathbb{X}\subseteq\mathbb{R}^{d}$
be an independent and identically distributed (IID) random sample
from some data generating process (DGP), characterized by some probability
distribution $F$. Let $h:\mathbb{X}^{m}\rightarrow\mathbb{R}$ be
a symmetric function, in the sense that
\[
h\left(\bm{x}_{\pi_{1}\left(1\right)},\dots,\bm{x}_{\pi_{1}\left(m\right)}\right)=h\left(\bm{x}_{\pi_{2}\left(1\right)},\dots,\bm{x}_{\pi_{2}\left(m\right)}\right)\text{,}
\]
for all $\bm{\pi}_{i}^{\top}=\left(\pi_{i}\left(1\right),\dots,\pi_{i}\left(m\right)\right)\in\Pi_{m}$
($i\in\left\{ 1,2\right\} $), where $\Pi_{m}$ is the set of all
permutations of the first $m$ consecutive natural numbers. We say
that $h$ is an order $m$ symmetric kernel. Assuming that the parameter
\[
\theta\left(F\right)=\mathbb{E}_{F}h\left(\bm{X}_{1},\dots,\bm{X}_{m}\right)=\int_{\mathbb{X}}\cdots\int_{\mathbb{X}}h\left(\bm{x}_{1},\dots,\bm{x}_{m}\right)\text{d}F\left(\bm{x}_{1}\right)\dots\text{d}F\left(\bm{x}_{m}\right)\text{,}
\]
exists, we can unbiasedly estimate $\theta\left(F\right)=\theta$
via the so-called U-statistic
\[
U_{n}=U\left(\bm{X}_{1},\dots,\bm{X}_{n}\right)={n \choose m}^{-1}\sum_{\bm{\kappa}\in\mathbb{K}_{m}}h\left(\bm{X}_{\kappa\left(1\right)},\dots,\bm{X}_{\kappa\left(m\right)}\right)\text{,}
\]
where $\bm{\kappa}^{\top}=\left(\kappa\left(1\right),\dots,\kappa\left(m\right)\right)\in\mathbb{K}_{m}$
and $\mathbb{K}_{m}$ is the set of all $n!/\left[\left(n-m\right)!m!\right]$
distinct combinations of $m$ elements from the first $n$ consecutive
natural numbers.

The U-statistics were first studied in the landmark articles of \citet{Halmos:1946aa}
and \citet{Hoeffding:1948aa}. Since their introduction, a significant
body of work has been produced on the topic. Comprehensive treatments
of the topic can be found in \citet[Ch. 5]{Serfling:1980aa}, \citet{Lee:1990aa},
\citet{Koroljuk:1994aa}, and \citet{Bose:2018aa}.

In recent years, concentration inequalities have become an important
research theme in the machine learning, probability, and statistics
research, due to their range of practical and theoretical applications.
The current state of the literature is well-reported in the volumes
of \citet{Ledoux:2001aa}, \citet{Massart:2007aa}, \citet{Dubhashi:2009aa},
\citet{Boucheron:2013aa}, and \citet{Bercu:2015aa}.

The first results regarding the concentration of $U_{n}$ about its
mean value $\theta$ were those established in \citet{Hoeffding:1963aa}.
Assume that $h\in\left[a,b\right]$ (for $a,b\in\mathbb{R}$, such
that $a<b$), and denote the variance of $h$ by $\sigma^{2}=\mathbb{V}_{F}h\left(\bm{X}_{1},\dots,\bm{X}_{m}\right)$.
Then, for any $\epsilon>0$ and $m\le n$, \citet{Hoeffding:1963aa}
proved the one-sided inequalities
\begin{equation}
\Pr\left(U_{n}-\theta\ge\epsilon\right)\le\exp\left(-\frac{2\left\lfloor n/m\right\rfloor \epsilon^{2}}{\left(b-a\right)^{2}}\right)\text{ and}\label{eq: one side hoeffding}
\end{equation}
\begin{align}
\Pr\left(U_{n}-\theta\ge\epsilon\right) & \le\exp\left(-\frac{\left\lfloor n/m\right\rfloor \epsilon^{2}}{2\sigma^{2}+\left(2c/3\right)\epsilon}\right)\text{,}\label{eq: one side bernstein}
\end{align}
where $\left\lfloor z\right\rfloor =\max\left\{ \zeta\in\mathbb{Z}:\zeta\le z\right\} $
is the floor function (cf. \citealp[Prop. 2.3]{Arcones:1993aa}) and
$c=2\max\left\{ \left|a\right|,\left|b\right|\right\} $. It is procedural
to demonstrate that the right-hand sides (RHSs) of (\ref{eq: one side hoeffding})
and (\ref{eq: one side bernstein}) also upper bound $\Pr\left(\theta-U_{n}\right)$,
thus we have the absolute inequalities
\begin{equation}
\Pr\left(\left|U_{n}-\theta\right|\ge\epsilon\right)\le2\exp\left(-\frac{2\left\lfloor n/m\right\rfloor \epsilon^{2}}{\left(b-a\right)^{2}}\right)\text{ and}\label{eq: two side hoeffding}
\end{equation}
\begin{equation}
\Pr\left(\left|U_{n}-\theta\right|\ge\epsilon\right)\le2\exp\left(-\frac{\left\lfloor n/m\right\rfloor \epsilon^{2}}{2\sigma^{2}+\left(2c/3\right)\epsilon}\right)\text{,}\label{eq: two side bernstein}
\end{equation}
since $\Pr\left(\left|U_{n}-\theta\right|\ge\epsilon\right)=\Pr\left(U_{n}-\theta\ge\epsilon\right)+\Pr\left(\theta-U_{n}\ge\epsilon\right)$.

Since the establishment of (\ref{eq: two side hoeffding}) and (\ref{eq: two side bernstein}),
there had been little progress in the derivation of fundamentally
novel bounds for U-statistics. A major contribution in this direction
was due to \citet{Arcones:1995aa}. Assume that $h\in\left[0,1\right]$,
and that $\varsigma^{2}=\mathbb{V}_{F}\mathbb{E}_{F}\left[h\left(X_{1},\dots,X_{m}\right)|X_{1}\right]<\infty$
exists. Then, for any $\epsilon>0$ and $m\le n$, \citet{Arcones:1995aa}
proved that
\begin{equation}
\Pr\left(\left|U_{n}-\theta\right|\ge\epsilon\right)\le4\exp\left(-\frac{\left\lfloor n/m\right\rfloor \epsilon^{2}}{2m\varsigma^{2}+\left(2^{m+3}m^{m-1}+\left[2/3\right]m^{-2}\right)\epsilon}\right)\text{.}\label{eq: arcones}
\end{equation}

In practice, only bounds of $h$ tend to be known, regarding data
from any arbitrary DGP. Thus, without knowledge of the variances $\sigma^{2}$
or $\varsigma^{2}$, the Bernstein-type bounds (\ref{eq: one side bernstein}),
(\ref{eq: two side bernstein}), and (\ref{eq: arcones}) cannot be
used numerically. In such situations, only the Hoeffding-type bounds
(\ref{eq: one side hoeffding}) and (\ref{eq: two side hoeffding})
tend to see practical application.

Let $X_{1},\dots,X_{n}\in\left[a,b\right]$ be independent random
variables. If $\Sigma_{n}^{2}=n^{-1}\sum_{i=1}^{n}\mathbb{V}X_{i}$
and $\bar{X}_{n}=n^{-1}\sum_{i=1}^{n}X_{i}$, then for any $\epsilon>0$,
\citet{Bennett:1962aa} proved the classic Bernstein-type inequalities
\begin{equation}
\Pr\left(\bar{X}_{n}-\mathbb{E}\bar{X}_{n}\ge\epsilon\right)\le\exp\left(-\frac{n\epsilon^{2}}{\Sigma_{n}^{2}/2+\left(2c/3\right)\epsilon}\right)\label{eq: classic bernstein one-sided}
\end{equation}
and
\begin{equation}
\Pr\left(\left|\bar{X}_{n}-\mathbb{E}\bar{X}_{n}\right|\ge\epsilon\right)\le2\exp\left(-\frac{\epsilon^{2}}{\Sigma_{n}^{2}/2+\left(2c/3\right)\epsilon}\right)\text{,}\label{eq: classic bernstein two-sided}
\end{equation}
where $c=2\max\left\{ \left|a\right|,\left|b\right|\right\} $. See
also \citet[Thm. 2.28]{Bercu:2015aa}. Similar to the U-statistics
counterpart, the general inequalities (\ref{eq: classic bernstein one-sided})
and (\ref{eq: classic bernstein two-sided}) require knowledge of
the variance $\Sigma_{n}^{2}$ to be of practical use.

A primary application of concentration inequalities is the construction
of $\left(1-\delta\right)\times100\%$ confidence intervals (CIs)
for some estimator of interest, where $\delta\in\left(0,1\right)$.
Recently, empirical CIs based on the (\ref{eq: classic bernstein one-sided})
and (\ref{eq: classic bernstein two-sided}) forms, where $\Sigma_{n}^{2}$
is replaced by an estimator, have been constructed by \citet{Audibert:2009aa}
and \citet{Maurer:2009aa}. These results allow for the construction
of CIs around the mean of data with known bounds, but unknown variances.
Such bounds were applied to perform variance penalized estimation
in \citet{Maurer:2009aa} and for the analysis of multi-armed bandit
problems in \citet{Audibert:2009aa}. Other examples of applications
include racing-based online model selection \citep{Mnih:2008aa} and
classifier boosting \citep{Shivaswamy:2010aa}.

In \citet{Peel:2010aa}, analogous results to \citet{Audibert:2009aa}
and \citet{Maurer:2009aa} were obtained, for the specific context
of U-statistics. That is, empirical two-sided CIs based on the Bernstein-type
bounds (\ref{eq: two side bernstein}) and (\ref{eq: arcones}) were
constructed, where the variances $\sigma^{2}$ or $\varsigma^{2}$
were replaced by respective empirical estimators. These CIs have useful
applications in racing-based online model selection \citep{Peel:2010aa},
change detection \citep{Sakthithasan:2013aa}, and optimal treatment
allocation \citep{Liang:2018aa}. Since the work of \citet{Peel:2010aa},
\citet{Loh:2013aa} also presented an empirical two-side CI, based
on (\ref{eq: arcones}).

In this paper, we utilize inequalities (\ref{eq: one side hoeffding})
and (\ref{eq: one side bernstein}) in order to construct empirical
one-sided CIs for U-statistics. We demonstrate that these bounds are
able to produce one-sided CIs that are analogous to the two-sided
bounds of \citet{Peel:2010aa}. Furthermore, the two-sided form of
our construction provides tighter bounds than those obtained by \citet{Peel:2010aa}.
Using our constructed CIs, we also demonstrate how one can obtain
empirical CIs for the mean from a recently-derived variance-dependent
improved Hoeffding-type concentration inequality of \citet{Bercu:2015aa}.
Our work can be seen as an extension and refinement of the results
of \citet{Peel:2010aa} and as an addition to the literature on empirical
variance-dependent bounds, as pioneered by \citet{Audibert:2009aa}
and \citet{Maurer:2009aa}. We make numerous comments regarding the
relationship between our work and previously obtained outcomes in
the final section of the article.

The paper proceeds as follows. In Section 2, we present the main results
of the paper. New empirical CIs are derived in Section 3. Concluding
remarks regarding our exposition and results are presented in Section
4.

\section{Main results}

\subsection{Technical preliminaries}

We begin the presentation of our main results by providing a pair
of lemmas that are used throughout the remainder of the paper.
\begin{lem}
[Union bound]\label{lem union bound}For random variables $X$, $Y$,
$Z$,
\[
\Pr\left(X>Z\right)\le\Pr\left(X>Y\right)+\Pr\left(Y>Z\right)\text{.}
\]
\end{lem}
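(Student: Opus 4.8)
The plan is to reduce the claim to a statement about events by observing that the only way $X > Z$ can hold is if at least one of the two "intermediate" comparisons involving $Y$ must hold. The key observation is that the event $\{X > Z\}$ is contained in the union of the events $\{X > Y\}$ and $\{Y > Z\}$; once this containment is established, the result follows immediately from monotonicity of probability and the subadditivity (Boole) inequality.

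First I would argue the set inclusion $\{X > Z\} \subseteq \{X > Y\} \cup \{Y > Z\}$ pointwise on the sample space. Fix an outcome $\omega$ and suppose $X(\omega) > Z(\omega)$. I would argue by contradiction: if $\omega$ belonged to neither event on the right, then we would have both $X(\omega) \le Y(\omega)$ and $Y(\omega) \le Z(\omega)$, which chain together to give $X(\omega) \le Z(\omega)$, contradicting $X(\omega) > Z(\omega)$. Hence $\omega$ must lie in at least one of the two events, establishing the inclusion.

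Next I would apply the probability measure to both sides of this inclusion. Monotonicity gives
\[
\Pr\left(X > Z\right) \le \Pr\left(\left\{X > Y\right\} \cup \left\{Y > Z\right\}\right)\text{,}
\]
and Boole's inequality (finite subadditivity) bounds the probability of the union by the sum of the individual probabilities, yielding the desired conclusion
\[
\Pr\left(X > Z\right) \le \Pr\left(X > Y\right) + \Pr\left(Y > Z\right)\text{.}
\]

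There is essentially no serious obstacle here; the lemma is elementary and the entire content resides in the pointwise inclusion of events, which is a direct consequence of the transitivity of the order relation on $\mathbb{R}$. The only point requiring any care is to treat the strict and non-strict inequalities consistently, so that the contrapositive of "being in neither right-hand event" produces the chain $X \le Y \le Z$ with the correct (non-strict) inequalities, which is exactly what is needed to contradict the strict inequality $X > Z$.
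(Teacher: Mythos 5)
Your proof is correct and follows essentially the same route as the paper's: establish the event inclusion $\left\{ X>Z\right\} \subseteq\left\{ X>Y\right\} \cup\left\{ Y>Z\right\}$ and then apply monotonicity together with Boole's inequality. The only difference is that you spell out the pointwise contrapositive argument for the inclusion, which the paper simply asserts.
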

\begin{proof}
For any events $\mathscr{A}$, $\mathscr{B}$, we have the union bound:
\[
\Pr\left(\mathscr{A}\cup\mathscr{B}\right)\le\Pr\left(\mathscr{A}\right)+\Pr\left(\mathscr{B}\right)\text{.}
\]
Consider that $\left(X>Z\right)$ is a subset of $\left(X>Y\right)\cup\left(Y>Z\right)$.
Thus
\[
\Pr\left(X>Z\right)\le\Pr\left(\left(X>Y\right)\cup\left(Y>Z\right)\right)\le\Pr\left(X>Y\right)+\Pr\left(Y>Z\right)\text{.}
\]
\end{proof}
\begin{lem}
[Inequality reversal]\label{lem inequality reversal}Suppose that
$X$ is a random variable, and let $A,B>0$ and $C,D\ge0$, such that
for every $\epsilon>0$,
\[
\Pr\left(X\ge\epsilon\right)\le A\exp\left(-\frac{B\epsilon^{2}}{C+D\epsilon}\right)\text{.}
\]
Then, with probability at least $1-\delta$, we have
\[
X\le\sqrt{\frac{C}{B}\log\frac{A}{\delta}}+\frac{D}{B}\log\frac{A}{\delta}\text{.}
\]
\end{lem}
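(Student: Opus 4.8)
The plan is to \emph{invert} the hypothesised tail bound: rather than fixing a deviation $\epsilon$ and reading off a probability, I would fix the target failure probability $\delta$ and solve for the largest deviation the bound can rule out. Writing $L=\log(A/\delta)$ for brevity, the natural choice is the value $\epsilon$ at which the right-hand side equals $\delta$, i.e.\ $A\exp\left(-B\epsilon^{2}/(C+D\epsilon)\right)=\delta$. Taking logarithms turns this into $B\epsilon^{2}/(C+D\epsilon)=L$, and clearing the denominator reduces it to the quadratic
\[
B\epsilon^{2}-DL\epsilon-CL=0 .
\]
Here I would note that $L\ge 0$ (which holds whenever $\delta\le A$, as is the case in every application, where $A\in\{1,2,4\}$ and $\delta\in(0,1)$), so that the coefficients behave as expected and the square roots appearing below are real.

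The unique nonnegative root is
\[
\epsilon^{*}=\frac{DL+\sqrt{D^{2}L^{2}+4BCL}}{2B},
\]
and by construction the hypothesis gives $\Pr\left(X\ge\epsilon^{*}\right)\le\delta$ exactly at this point. To match the clean closed form in the statement I would then bound $\epsilon^{*}$ from above using subadditivity of the square root, $\sqrt{D^{2}L^{2}+4BCL}\le\sqrt{D^{2}L^{2}}+\sqrt{4BCL}=DL+2\sqrt{BCL}$. Substituting this into the expression for $\epsilon^{*}$ and simplifying yields
\[
\epsilon^{*}\le\frac{DL}{B}+\sqrt{\frac{CL}{B}}=\sqrt{\frac{C}{B}\log\frac{A}{\delta}}+\frac{D}{B}\log\frac{A}{\delta},
\]
which is precisely the bound $\epsilon_{0}$ claimed in the statement. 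Since the tail probability $\Pr(X\ge\epsilon)$ is nonincreasing in $\epsilon$ and $\epsilon_{0}\ge\epsilon^{*}$, I conclude $\Pr\left(X\ge\epsilon_{0}\right)\le\Pr\left(X\ge\epsilon^{*}\right)\le\delta$, so that $X\le\epsilon_{0}$ holds with probability at least $1-\delta$, as required.

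The main obstacle is the algebraic passage from the exact quadratic root to the stated closed form: the key inequality $\sqrt{D^{2}L^{2}+4BCL}\le DL+2\sqrt{BCL}$ must be justified via subadditivity of the square root (valid because both summands are nonnegative), and the subsequent simplification must be verified to land exactly on $\epsilon_{0}$. It is also worth checking the degenerate cases $C=0$ and $D=0$, where the quadratic loses a term; in each the formula reduces to a single summand of $\epsilon_{0}$ and in fact holds with equality, confirming the argument is not vacuous at the boundary. Alternatively, one could bypass the quadratic entirely by substituting $\epsilon_{0}$ directly into the hypothesis and verifying that $B\epsilon_{0}^{2}/(C+D\epsilon_{0})\ge L$ (equivalently $B\epsilon_{0}^{2}-L(C+D\epsilon_{0})\ge 0$), but the inversion route above is preferable as it explains where the expression $\epsilon_{0}$ originates.
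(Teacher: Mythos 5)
Your proposal is correct and follows essentially the same route as the paper's own proof: invert the tail bound by setting the right-hand side to $\delta$, solve the resulting quadratic for $\epsilon$, and apply the square-root subadditivity $\sqrt{x+y}\le\sqrt{x}+\sqrt{y}$ to reach the stated closed form. Your version is in fact slightly more careful than the paper's, since you make explicit the monotonicity of the tail probability and the requirement $\log(A/\delta)\ge0$, both of which the paper leaves implicit.
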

\begin{proof}
Let $\Pr\left(X\ge\epsilon\right)=\delta$, then we have
\[
\delta\le A\exp\left(-\frac{B\epsilon^{2}}{C+D\epsilon}\right)\text{,}
\]
which we can solve for $\epsilon$, to get
\[
\epsilon\le\frac{1}{2B}\left(D\log\frac{A}{\delta}+\sqrt{D^{2}\log^{2}\frac{A}{\delta}+4BC\log\frac{A}{\delta}}\right)\text{.}
\]
We then apply the square root inequality $\sqrt{x+y}\le\sqrt{x}+\sqrt{y}$
in order to obtain the desired result.
\end{proof}
Although the two preceding results appear elsewhere in the literature,
we include them for the convenience of the reader. Lemma \ref{lem inequality reversal}
appears as Lemma 1 in \citet{Peel:2010aa}.

\subsection{\label{subsec:The-sample-variance}Concentration of the variance}

Let $X_{1},\dots,X_{n}\in\mathbb{R}$ be IID random variables and
note that the symmetric kernel
\[
h\left(x_{1},x_{2}\right)=\left(x_{1}-x_{2}\right)^{2}/2
\]
corresponds to the U-statistic
\[
U_{n}={n \choose 2}^{-1}\sum_{\bm{\kappa}\in\mathbb{K}_{2}}\frac{\left(X_{\kappa\left(1\right)}-X_{\kappa\left(2\right)}\right)^{2}}{2}=\left(n-1\right)^{-1}\sum_{i=1}^{n}\left(X_{i}-\bar{X}_{n}\right)^{2}=S_{n}^{2}\text{,}
\]
which is the unbiased estimator of the variance. That is $\theta=\mathbb{E}_{F}S_{n}^{2}=\mathbb{V}_{F}X$,
where $X$ arises from the same DGP as the sample $X_{1},\dots,X_{n}$.
Furthermore, assume that $X\in\left[0,1\right]$, so that $h\left(X_{1},X_{n}\right)\in\left[0,1/2\right]$.
Using (\ref{eq: one side hoeffding}), we can obtain our first result.
\begin{prop}
\label{prop Hoeffding bounds for the variance} If $X_{1},\dots,X_{n}\in\left[0,1\right]$
are IID, then for any $\epsilon>0$,
\begin{equation}
\Pr\left(S_{n}^{2}-\mathbb{V}_{F}X\ge\epsilon\right)\le\exp\left(-8\left\lfloor n/2\right\rfloor \epsilon^{2}\right)\text{.}\label{eq: var hoeffding 1}
\end{equation}
Alternatively, with probability at least $1-\delta$,
\begin{equation}
S_{n}^{2}-\mathbb{V}_{F}X\le\sqrt{\frac{1}{8\left\lfloor n/2\right\rfloor }\log\frac{1}{\delta}}\text{.}\label{eq: var hoeffding ci 1}
\end{equation}
Both (\ref{eq: var hoeffding 1}) and (\ref{eq: var hoeffding ci 1})
hold with $S_{n}^{2}-\mathbb{V}_{F}X$ replaced by $\mathbb{V}_{F}X-S_{n}^{2}$.
\end{prop}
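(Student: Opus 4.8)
The plan is to recognize that the preamble to the statement has already done most of the structural work, so that the one-sided Hoeffding bound (\ref{eq: one side hoeffding}) applies essentially verbatim and Lemma \ref{lem inequality reversal} then converts the resulting tail bound into the stated confidence interval. The key fact supplied in the text is that $S_{n}^{2}$ is an order $m=2$ U-statistic with symmetric kernel $h\left(x_{1},x_{2}\right)=\left(x_{1}-x_{2}\right)^{2}/2$ and parameter $\theta=\mathbb{E}_{F}S_{n}^{2}=\mathbb{V}_{F}X$. The first step is therefore simply to read off the constants required to instantiate (\ref{eq: one side hoeffding}).

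I would begin by fixing the range of the kernel. Since $X\in\left[0,1\right]$, the squared difference $\left(x_{1}-x_{2}\right)^{2}$ lies in $\left[0,1\right]$, whence $h\in\left[0,1/2\right]$, and we may take $a=0$ and $b=1/2$, giving $\left(b-a\right)^{2}=1/4$. Substituting $m=2$ and $\left(b-a\right)^{2}=1/4$ into (\ref{eq: one side hoeffding}) then yields
\[
\Pr\left(S_{n}^{2}-\mathbb{V}_{F}X\ge\epsilon\right)\le\exp\left(-\frac{2\left\lfloor n/2\right\rfloor \epsilon^{2}}{1/4}\right)=\exp\left(-8\left\lfloor n/2\right\rfloor \epsilon^{2}\right)\text{,}
\]
which is precisely (\ref{eq: var hoeffding 1}).

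Next I would invert this tail bound via Lemma \ref{lem inequality reversal}. The bound (\ref{eq: var hoeffding 1}) matches the hypothesis $\Pr\left(X\ge\epsilon\right)\le A\exp\left(-B\epsilon^{2}/\left(C+D\epsilon\right)\right)$ of that lemma upon setting $A=1$, $B=8\left\lfloor n/2\right\rfloor $, $C=1$, and $D=0$; the crucial observation is that the Hoeffding-type bound contributes no linear-in-$\epsilon$ denominator term, so $D$ vanishes. The lemma then asserts that, with probability at least $1-\delta$,
\[
S_{n}^{2}-\mathbb{V}_{F}X\le\sqrt{\frac{C}{B}\log\frac{A}{\delta}}+\frac{D}{B}\log\frac{A}{\delta}=\sqrt{\frac{1}{8\left\lfloor n/2\right\rfloor }\log\frac{1}{\delta}}\text{,}
\]
the final equality using $A=1$ and $D=0$; this is (\ref{eq: var hoeffding ci 1}).

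Finally, the reverse-direction claim follows immediately from the symmetry already noted in the text preceding the statement, namely that the right-hand side of (\ref{eq: one side hoeffding}) also upper bounds $\Pr\left(\theta-U_{n}\ge\epsilon\right)$; applying the identical two steps to $\mathbb{V}_{F}X-S_{n}^{2}$ reproduces both displays. I do not anticipate any genuine obstacle here, since the argument is pure substitution into results already available. The only points demanding care are the constant bookkeeping—correctly obtaining $\left(b-a\right)^{2}=1/4$ and hence the factor $8$—and recognizing that $D=0$ so that the linear term in Lemma \ref{lem inequality reversal} drops out, leaving only the square-root term.
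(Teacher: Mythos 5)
Your proposal is correct and follows exactly the paper's (one-line) proof: direct substitution of the order-$2$ kernel with range $[0,1/2]$ into (\ref{eq: one side hoeffding}) to get the constant $8$, followed by Lemma \ref{lem inequality reversal} with $A=1$, $B=8\left\lfloor n/2\right\rfloor$, $C=1$, $D=0$. The constant bookkeeping and the observation that the linear term vanishes are both right; no issues.
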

\begin{proof}
Result (\ref{eq: var hoeffding 1}) is obtained by direct substitution
$S_{n}^{2}$ into (\ref{eq: one side hoeffding}), and (\ref{eq: var hoeffding ci 1})
arises via Lemma \ref{lem inequality reversal}.
\end{proof}
Assume the same hypothesis as Proposition \ref{prop Hoeffding bounds for the variance}.
We consider instead a bound for $S_{n}^{2}$ using (\ref{eq: one side bernstein}).
Since $h\left(X_{1},X_{2}\right)\le1/2$, it is true that
\begin{equation}
\mathbb{V}_{F}S_{n}^{2}=\mathbb{E}_{F}\left[S_{n}^{4}\right]-\mathbb{E}_{F}\left[S_{n}^{2}\right]\le\mathbb{E}_{F}S_{n}^{2}=\mathbb{V}_{F}X\text{,}\label{eq: variance less than mean}
\end{equation}
and thus, by direct substitution into (\ref{eq: one side bernstein}),
we obtain the bound
\[
\Pr\left(S_{n}^{2}-\mathbb{V}_{F}X\ge\epsilon\right)\le\exp\left(-\frac{\left\lfloor n/2\right\rfloor \epsilon^{2}}{2\mathbb{V}_{F}X+\left(2/3\right)\epsilon}\right)\text{,}
\]
since $c=\max\left\{ 0,1\right\} =1$.

Using Lemma \ref{lem inequality reversal}, we have, with probability
at least $1-\delta$,

\[
S_{n}^{2}-\mathbb{V}_{F}X\le\sqrt{\frac{2\mathbb{V}_{F}X}{\left\lfloor n/2\right\rfloor }\log\frac{1}{\delta}}+\frac{2}{3\left\lfloor n/2\right\rfloor }\log\frac{1}{\delta}\text{,}
\]
which we can complete the square to obtain
\begin{equation}
S_{n}^{2}\le\left[\sqrt{\mathbb{V}_{F}X}+\frac{1}{2}\sqrt{\frac{2}{\left\lfloor n/2\right\rfloor }\log\frac{1}{\delta}}\right]^{2}+\frac{1}{6\left\lfloor n/2\right\rfloor }\log\frac{1}{\delta}\text{.}\label{eq: intermediate 1}
\end{equation}
Taking the square root, and applying the square root inequality to
both sides of (\ref{eq: intermediate 1}) yields

\begin{align*}
\sqrt{S_{n}^{2}} & +\sqrt{\frac{1}{6\left\lfloor n/2\right\rfloor }\log\frac{1}{\delta}}\le\sqrt{\mathbb{V}_{F}X}+\sqrt{\frac{2}{4\left\lfloor n/2\right\rfloor }\log\frac{1}{\delta}}\\
 & \le\sqrt{\mathbb{V}_{F}X}+\left(\frac{\sqrt{2}}{2}+\frac{\sqrt{6}}{6}\right)\sqrt{\frac{1}{\left\lfloor n/2\right\rfloor }\log\frac{1}{\delta}}\text{.}
\end{align*}

From (\ref{eq: one side bernstein}), we also have, with probability
at least $1-\delta$,
\[
\mathbb{V}_{F}X-S_{n}^{2}\le\sqrt{\frac{2\mathbb{V}_{F}X}{\left\lfloor n/2\right\rfloor }\log\frac{1}{\delta}}+\frac{1}{3\left\lfloor n/2\right\rfloor }\log\frac{1}{\delta}\text{,}
\]
which rearranges to

\[
\mathbb{V}_{F}X-\sqrt{\frac{2\mathbb{V}_{F}X}{\left\lfloor n/2\right\rfloor }\log\frac{1}{\delta}}-\frac{1}{3\left\lfloor n/2\right\rfloor }\log\frac{1}{\delta}\le S_{n}^{2}
\]
and we can again complete the square to obtain

\[
\left[\sqrt{\mathbb{V}_{F}X}-\frac{1}{2}\sqrt{\frac{2}{\left\lfloor n/2\right\rfloor }\log\frac{1}{\delta}}\right]^{2}\le S_{n}^{2}+\frac{7}{6\left\lfloor n/2\right\rfloor }\log\frac{1}{\delta}\text{.}
\]
By the square root inequality, we have
\begin{align*}
\sqrt{\mathbb{V}_{F}X} & \le\sqrt{S_{n}^{2}}+\left(\frac{\sqrt{2}}{2}+\frac{\sqrt{42}}{6}\right)\sqrt{\frac{1}{\left\lfloor n/2\right\rfloor }\log\frac{1}{\delta}}\text{.}
\end{align*}
We therefore have the following empirical one-sided CIs.
\begin{prop}
\label{prop variance bernstein}If $X_{1},\dots,X_{n}\in\left[0,1\right]$
are IID, then with probability at least $1-\delta$,
\begin{equation}
\sqrt{S_{n}^{2}}\le\sqrt{\mathbb{V}_{F}X}+\left(\frac{\sqrt{2}}{2}+\frac{\sqrt{6}}{6}\right)\sqrt{\frac{1}{\left\lfloor n/2\right\rfloor }\log\frac{1}{\delta}}\text{ and }\label{eq: Prop 2 1}
\end{equation}
\begin{equation}
\sqrt{\mathbb{V}_{F}X}\le\sqrt{S_{n}^{2}}+\left(\frac{\sqrt{2}}{2}+\frac{\sqrt{42}}{6}\right)\sqrt{\frac{1}{\left\lfloor n/2\right\rfloor }\log\frac{1}{\delta}}\text{,}\label{eq: Prop 2 2}
\end{equation}
where $\sqrt{2}/2+\sqrt{6}/6\le1.116$ and $\sqrt{2}/2+\sqrt{42}/6\le1.788$.
\end{prop}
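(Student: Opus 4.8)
The plan is to reproduce, for each tail separately, the Bernstein-based argument already assembled for the variance U-statistic, and then to convert the resulting additive deviation bounds into the stated bounds on standard deviations by completing the square and invoking the square root inequality. Throughout I would exploit that $S_n^2$ is the U-statistic with order $m=2$ kernel $h(x_1,x_2)=(x_1-x_2)^2/2\in[0,1/2]$, so that the Bernstein constant is $c=1$.

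First I would substitute this kernel into the one-sided Bernstein inequality (\ref{eq: one side bernstein}), using the variance bound (\ref{eq: variance less than mean}), namely $\sigma^2\le\mathbb{V}_F X$, to replace the unknown kernel variance by $\mathbb{V}_F X$ in the exponent. This gives $\Pr(S_n^2-\mathbb{V}_F X\ge\epsilon)\le\exp(-\lfloor n/2\rfloor\epsilon^2/(2\mathbb{V}_F X+(2/3)\epsilon))$, together with the identical bound on $\mathbb{V}_F X-S_n^2$. Next I would apply Lemma \ref{lem inequality reversal} with $A=1$, $B=\lfloor n/2\rfloor$, $C=2\mathbb{V}_F X$, and $D=2/3$ to each tail, yielding additive bounds of the form $S_n^2-\mathbb{V}_F X\le\sqrt{(2\mathbb{V}_F X/\lfloor n/2\rfloor)\log(1/\delta)}+(2/3\lfloor n/2\rfloor)\log(1/\delta)$ and its mirror image. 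The decisive manoeuvre is then to recognise $\mathbb{V}_F X\pm\sqrt{(2\mathbb{V}_F X/\lfloor n/2\rfloor)\log(1/\delta)}$ as the perfect square $(\sqrt{\mathbb{V}_F X}\pm\tfrac12\sqrt{(2/\lfloor n/2\rfloor)\log(1/\delta)})^2$ minus a residual proportional to $(1/\lfloor n/2\rfloor)\log(1/\delta)$, exactly as recorded in (\ref{eq: intermediate 1}). Taking square roots and applying $\sqrt{x+y}\le\sqrt{x}+\sqrt{y}$ collapses the square into $\sqrt{\mathbb{V}_F X}$ plus a single multiple of $\sqrt{(1/\lfloor n/2\rfloor)\log(1/\delta)}$; collecting coefficients, which share a $\tfrac{\sqrt2}{2}$ contribution from the cross term and differ only in the residual term ($\tfrac{\sqrt6}{6}$ versus $\tfrac{\sqrt{42}}{6}$, from residuals $1/6$ and $7/6$ under the root), delivers (\ref{eq: Prop 2 1}) and (\ref{eq: Prop 2 2}).

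The step I expect to require the most care is the lower-tail direction producing (\ref{eq: Prop 2 2}). There the completed square $(\sqrt{\mathbb{V}_F X}-\tfrac12\sqrt{\cdots})^2$ has a base that need not be non-negative, so one cannot blindly take the square root of both sides. I would handle this cleanly by noting that $x\le|x|=\sqrt{x^2}$ for every real $x$, so that $\sqrt{\mathbb{V}_F X}-\tfrac12\sqrt{\cdots}\le\sqrt{(\sqrt{\mathbb{V}_F X}-\tfrac12\sqrt{\cdots})^2}$ holds regardless of sign; the square root inequality then finishes the bound. I would likewise double-check the residual bookkeeping, since the asymmetry between the two reported coefficients traces entirely to the fact that the upper tail leaves $+\tfrac{1}{6}(1/\lfloor n/2\rfloor)\log(1/\delta)$ while the lower tail accumulates $+\tfrac{7}{6}(1/\lfloor n/2\rfloor)\log(1/\delta)$ under the root.
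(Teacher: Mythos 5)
Your proposal follows the paper's own proof essentially step for step: the same kernel with $c=1$, the bound $\sigma^{2}\le\mathbb{V}_{F}X$ fed into the one-sided Bernstein inequality, Lemma \ref{lem inequality reversal}, completion of the square with residuals $\tfrac{1}{6}$ and $\tfrac{7}{6}$ under the root, and the square root inequality. Your explicit treatment of the possibly negative base $\sqrt{\mathbb{V}_{F}X}-\tfrac{1}{2}\sqrt{\cdots}$ via $x\le\sqrt{x^{2}}$ is a point the paper leaves implicit, and your residual bookkeeping is the correct one (the paper's intermediate display for the lower tail shows $\tfrac{1}{3\lfloor n/2\rfloor}$ where $\tfrac{2}{3\lfloor n/2\rfloor}$ is intended, but its final constants agree with yours).
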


\subsection{Variance of a U-statistic}

Assume that $h\in\left[0,1\right]$. Following \citet{Peel:2010aa},
we introduce a new kernel of order $2m$:
\[
\eta\left(\bm{x}_{1},\dots,\bm{x}_{2m}\right)=\left[h\left(\bm{x}_{1},\dots,\bm{x}_{m}\right)-h\left(\bm{x}_{m+1},\dots,\bm{x}_{2m}\right)\right]^{2}/2\text{.}
\]

We note that $\eta$ can either be symmetric or otherwise. If $\eta$
is not symmetric, then we can define a symmetric version of $\eta$,
in the form
\[
\tilde{\eta}\left(\bm{x}_{1},\dots,\bm{x}_{2m}\right)=\frac{1}{\left(2m\right)!}\sum_{\bm{\pi}\in\Pi_{2m}}\eta\left(\bm{x}_{\pi\left(1\right)},\dots,\bm{x}_{\pi\left(2m\right)}\right)\text{,}
\]
where $\mathbb{E}_{F}\left[\eta\left(\bm{X}_{1},\dots,\bm{X}_{2m}\right)\right]=\mathbb{E}_{F}\left[\tilde{\eta}\left(\bm{X}_{1},\dots,\bm{X}_{2m}\right)\right]$
(cf. \citealp[Ch. 5]{Serfling:1980aa}). Furthermore, we can inspect
that
\begin{align*}
\mathbb{E}_{F}\eta & =\frac{1}{2}\mathbb{E}_{F}\left[h\left(\bm{X}_{1},\dots,\bm{X}_{m}\right)-h\left(\bm{X}_{m+1},\dots,\bm{X}_{2m}\right)\right]^{2}\\
 & =\frac{1}{2}\left[\mathbb{E}_{F}h^{2}-2\mathbb{E}_{F}h\mathbb{E}_{F}h+\mathbb{E}_{F}h^{2}\right]\\
 & =\mathbb{E}_{F}h^{2}-\left[\mathbb{E}_{F}h\right]^{2}=\mathbb{V}_{F}h=\sigma^{2}\text{,}
\end{align*}
and note also that $h\in\left[0,1\right]$ implies $\eta\in\left[0,1/2\right]$.

Consider the U-statistics
\[
W_{n}={n \choose 2m}^{-1}\sum_{\bm{\kappa}\in\mathbb{K}_{2m}}\eta\left(\bm{X}_{\kappa\left(1\right)},\dots,\bm{X}_{\kappa\left(2m\right)}\right)
\]
and
\[
\tilde{W}_{n}={n \choose 2m}^{-1}\sum_{\bm{\kappa}\in\mathbb{K}_{2m}}\tilde{\eta}\left(\bm{X}_{\kappa\left(1\right)},\dots,\bm{X}_{\kappa\left(2m\right)}\right)\text{.}
\]
Using $W_{n}$ and $\tilde{W}_{n}$, we seek to obtain $\left(1-\delta\right)\times100\%$
one-sided CIs for the comparison of the quantities $\mathbb{V}_{F}h$
and $W_{n}$.

Assume that $\eta$ is a symmetric kernel. As in Section \ref{subsec:The-sample-variance},
we may use (\ref{eq: one side hoeffding}) to obtain the following
result, analogous to Proposition \ref{prop Hoeffding bounds for the variance}.
\begin{prop}
\label{prop Hoeffding bounds for the U-stat var} If $\bm{X}_{1},\dots,\bm{X}_{n}\in\mathbb{X}$
are IID, $h\in\left[0,1\right]$, and $\eta$ is symmetric, then for
any $\epsilon>0$,
\begin{equation}
\Pr\left(W_{n}-\sigma^{2}\ge\epsilon\right)\le\exp\left(-8\left\lfloor n/\left(2m\right)\right\rfloor \epsilon^{2}\right)\text{.}\label{eq: var hoeffding U-stat}
\end{equation}
Alternatively, with probability at least $1-\delta$,
\begin{equation}
W_{n}-\sigma^{2}\le\sqrt{\frac{1}{8\left\lfloor n/\left(2m\right)\right\rfloor }\log\frac{1}{\delta}}\text{.}\label{eq: var hoeffding ci U-stat}
\end{equation}
Both (\ref{eq: var hoeffding U-stat}) and (\ref{eq: var hoeffding ci U-stat})
hold with $W_{n}-\sigma^{2}$ replaced by $\sigma^{2}-W_{n}$.
\end{prop}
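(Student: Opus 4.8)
The plan is to mirror the proof of Proposition~\ref{prop Hoeffding bounds for the variance} almost verbatim, the only change being that $W_n$ is now treated as a U-statistic of order $2m$ rather than of order $2$. First I would record the three facts that make a direct substitution into (\ref{eq: one side hoeffding}) legitimate. By hypothesis $\eta$ is symmetric, so $W_n$ is a genuine U-statistic with symmetric kernel $\eta$ and order $2m$. The display immediately preceding the statement shows $\mathbb{E}_F\eta=\mathbb{V}_Fh=\sigma^2$, so $W_n$ is an unbiased estimator of $\theta=\sigma^2$. Finally, $h\in[0,1]$ forces $\eta\in[0,1/2]$, so the bounding interval is $[a,b]=[0,1/2]$ with $(b-a)^2=1/4$.

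With these in hand I would substitute directly into the one-sided Hoeffding inequality (\ref{eq: one side hoeffding}), now reading its order parameter as $2m$ (so that $\left\lfloor n/m\right\rfloor$ becomes $\left\lfloor n/(2m)\right\rfloor$) and using $(b-a)^2=1/4$. This gives
\[
\Pr\left(W_n-\sigma^2\ge\epsilon\right)\le\exp\left(-\frac{2\left\lfloor n/(2m)\right\rfloor\epsilon^2}{1/4}\right)=\exp\left(-8\left\lfloor n/(2m)\right\rfloor\epsilon^2\right),
\]
which is precisely (\ref{eq: var hoeffding U-stat}).

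To obtain the CI form (\ref{eq: var hoeffding ci U-stat}), I would apply Lemma~\ref{lem inequality reversal} with $A=1$, $B=8\left\lfloor n/(2m)\right\rfloor$, $C=1$, and $D=0$; since the linear term vanishes, the conclusion reduces to $W_n-\sigma^2\le\sqrt{(C/B)\log(A/\delta)}=\sqrt{\log(1/\delta)/[8\left\lfloor n/(2m)\right\rfloor]}$. For the reverse statements, with $\sigma^2-W_n$ in place of $W_n-\sigma^2$, I would invoke the same observation used to pass from (\ref{eq: one side hoeffding}) to (\ref{eq: two side hoeffding}): the right-hand side of (\ref{eq: one side hoeffding}) also upper bounds $\Pr(\theta-U_n\ge\epsilon)$. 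Equivalently, one may apply the bound to the symmetric kernel $1/2-\eta$, which again takes values in $[0,1/2]$ and whose associated U-statistic is $1/2-W_n$.

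I do not anticipate a genuine obstacle here; the argument is essentially bookkeeping, identical in spirit to Proposition~\ref{prop Hoeffding bounds for the variance}. The only two points deserving a moment's care are that the relevant kernel order is $2m$ and not $m$, so the floor is $\left\lfloor n/(2m)\right\rfloor$, and that $(b-a)^2=1/4$ is correctly read off from $\eta\in[0,1/2]$, since this is what produces the constant $8$. The symmetry hypothesis on $\eta$ is precisely what licenses treating $W_n$ as a U-statistic to which (\ref{eq: one side hoeffding}) applies; without it one would instead have to argue through $\tilde{W}_n$.
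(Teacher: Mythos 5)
Your proposal is correct and matches the paper's own route: the paper gives no separate proof for this proposition, stating only that it follows ``as in Section 2.2'' by direct substitution of the order-$2m$ symmetric kernel $\eta\in[0,1/2]$ into the one-sided Hoeffding inequality (so $(b-a)^{2}=1/4$ yields the constant $8$ and the floor $\left\lfloor n/(2m)\right\rfloor$), followed by the inequality-reversal lemma for the CI form and the standard symmetry observation for the reversed deviation. Your bookkeeping of the kernel order, the range of $\eta$, the unbiasedness $\mathbb{E}_{F}\eta=\sigma^{2}$, and the parameters in the reversal lemma is all accurate.
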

Unfortunately, we cannot always assume that $\eta$ is symmetric.
However, by definition, $\tilde{\eta}$ is always symmetric and has
the same range as $\eta$. That is, if $h\in\left[0,1\right]$, then
$\tilde{\eta}\in\left[0,1/2\right]$. Thus, we have the following
Proposition.
\begin{prop}
\label{cor Hoeffding bounds for the U-stat asym} If $\bm{X}_{1},\dots,\bm{X}_{n}\in\mathbb{X}$
are IID and $h\in\left[0,1\right]$, then all of the conclusions from
Proposition \ref{prop Hoeffding bounds for the U-stat var} hold with
$W_{n}$ replaced by $\tilde{W}_{n}$.
\end{prop}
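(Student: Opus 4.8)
The plan is to observe that Proposition \ref{prop Hoeffding bounds for the U-stat var} never uses the specific algebraic form of $\eta$; it relies only on three structural properties of the kernel driving the U-statistic: that the kernel is symmetric, that it is bounded in $[0,1/2]$ (so that the relevant interval width is $b-a=1/2$, producing the factor $8$ in the exponent), and that its expectation equals $\sigma^{2}$. I would therefore verify that the symmetrized kernel $\tilde{\eta}$ possesses all three properties, and then invoke Proposition \ref{prop Hoeffding bounds for the U-stat var} verbatim with $\tilde{\eta}$ in place of $\eta$ and $\tilde{W}_{n}$ in place of $W_{n}$.

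First, $\tilde{\eta}$ is symmetric by construction: it is the average of $\eta$ over all $\left(2m\right)!$ permutations of its arguments, and averaging a function over the full permutation group of its arguments yields a function invariant under any further permutation (reindexing the sum by composition with the new permutation leaves it unchanged). Second, since each summand $\eta\left(\bm{x}_{\pi\left(1\right)},\dots,\bm{x}_{\pi\left(2m\right)}\right)$ lies in $[0,1/2]$ regardless of the ordering of its arguments (as already noted, $h\in[0,1]$ forces $\eta\in[0,1/2]$), the convex combination $\tilde{\eta}$ also lies in $[0,1/2]$, so $\tilde{\eta}$ has exactly the same range as $\eta$. Third, the identity $\mathbb{E}_{F}\eta=\mathbb{E}_{F}\tilde{\eta}=\sigma^{2}$ was already recorded in the discussion preceding Proposition \ref{prop Hoeffding bounds for the U-stat var}, and holds because the $\bm{X}_{i}$ are IID, so every permuted copy of $\eta$ shares the common expectation $\sigma^{2}$.

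With these three facts established, $\tilde{W}_{n}$ is precisely a U-statistic of order $2m$ built from a symmetric kernel $\tilde{\eta}\in[0,1/2]$ with $\mathbb{E}_{F}\tilde{\eta}=\sigma^{2}$, which is exactly the configuration under which Proposition \ref{prop Hoeffding bounds for the U-stat var} was proved. Substituting $\tilde{\eta}$ and $\tilde{W}_{n}$ into that argument therefore yields both (\ref{eq: var hoeffding U-stat}) and (\ref{eq: var hoeffding ci U-stat}) with $W_{n}$ replaced by $\tilde{W}_{n}$, together with the corresponding reversed inequalities. There is essentially no obstacle here: the only point requiring care is confirming that symmetrization preserves both the range and the expectation, and these follow immediately from the convexity of the averaging operation and the IID assumption, respectively. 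The statement is thus an immediate corollary of Proposition \ref{prop Hoeffding bounds for the U-stat var}.
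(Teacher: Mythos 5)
Your proposal is correct and follows essentially the same route as the paper, which justifies the proposition only by the brief observation that $\tilde{\eta}$ is symmetric by construction, shares the range $\left[0,1/2\right]$ with $\eta$, and has the same expectation $\sigma^{2}$, so that the argument of Proposition \ref{prop Hoeffding bounds for the U-stat var} applies verbatim. Your additional care in spelling out why symmetrization preserves the range (convex combination) and the expectation (IID sample) is a welcome but minor elaboration of the paper's one-line justification.
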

In order to obtain Bernstein bound analogs of Proposition \ref{prop Hoeffding bounds for the U-stat var}
and Proposition (\ref{cor Hoeffding bounds for the U-stat asym}),
we require the following versions of inequality \ref{eq: variance less than mean}:
\[
\mathbb{V}_{F}\eta=\mathbb{E}_{F}\left[\eta^{2}\right]-\left[\mathbb{E}_{F}\eta\right]^{2}\le\mathbb{E}_{F}\left[\eta^{2}\right]\le\mathbb{E}_{F}\eta=\sigma^{2}\text{ and }
\]
\[
\mathbb{V}_{F}\tilde{\eta}=\mathbb{E}_{F}\left[\tilde{\eta}^{2}\right]-\left[\mathbb{E}_{F}\tilde{\eta}\right]^{2}\le\mathbb{E}_{F}\left[\tilde{\eta}^{2}\right]\le\mathbb{E}_{F}\tilde{\eta}=\sigma^{2}\text{.}
\]
In the same manner in which Proposition \ref{prop variance bernstein}
was obtained, we may prove the following result.
\begin{prop}
\label{prop symmetric bernstein variance}If $\bm{X}_{1},\dots,\bm{X}_{n}\in\mathbb{X}$
are IID, $h\in\left[0,1\right]$, and $\eta$ is symmetric, then with
probability at least $1-\delta$,
\begin{equation}
\sqrt{W_{n}}\le\sqrt{\sigma^{2}}+\left(\frac{\sqrt{2}}{2}+\frac{\sqrt{6}}{6}\right)\sqrt{\frac{1}{\left\lfloor n/\left(2m\right)\right\rfloor }\log\frac{1}{\delta}}\text{ and }\label{eq: sym bern1}
\end{equation}
\begin{equation}
\sqrt{\sigma^{2}}\le\sqrt{W_{n}}+\left(\frac{\sqrt{2}}{2}+\frac{\sqrt{42}}{6}\right)\sqrt{\frac{1}{\left\lfloor n/\left(2m\right)\right\rfloor }\log\frac{1}{\delta}}\text{,}\label{eq: sym bern2}
\end{equation}
where $\sqrt{2}/2+\sqrt{6}/6\le1.116$ and $\sqrt{2}/2+\sqrt{42}/6\le1.788$.
More generally, (\ref{eq: sym bern1}) and (\ref{eq: sym bern2})
also hold with $W_{n}$ replaced by $\tilde{W}_{n}$.
\end{prop}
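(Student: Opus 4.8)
The plan is to reproduce, almost verbatim, the derivation that precedes Proposition \ref{prop variance bernstein}, now with the order-$2m$ kernel $\eta$ in place of the variance kernel and $\left\lfloor n/\left(2m\right)\right\rfloor$ in place of $\left\lfloor n/2\right\rfloor$. Since $\eta$ is assumed symmetric, $W_{n}$ is a genuine U-statistic with mean $\mathbb{E}_{F}\eta=\sigma^{2}$ and range $\eta\in\left[0,1/2\right]$, so that $c=2\max\left\{0,1/2\right\}=1$. The only new ingredient is the variance--mean inequality $\mathbb{V}_{F}\eta\le\mathbb{E}_{F}\eta=\sigma^{2}$ recorded immediately before the statement. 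Because the right-hand side of (\ref{eq: one side bernstein}) increases when the variance in its denominator increases, replacing $\mathbb{V}_{F}\eta$ by the larger quantity $\sigma^{2}$ only weakens the bound, and I would obtain
\[
\Pr\left(W_{n}-\sigma^{2}\ge\epsilon\right)\le\exp\left(-\frac{\left\lfloor n/\left(2m\right)\right\rfloor \epsilon^{2}}{2\sigma^{2}+\left(2/3\right)\epsilon}\right)\text{.}
\]

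I would then invoke Lemma \ref{lem inequality reversal} with $A=1$, $B=\left\lfloor n/\left(2m\right)\right\rfloor$, $C=2\sigma^{2}$, and $D=2/3$ to get, with probability at least $1-\delta$, a linear-plus-square-root deviation bound on $W_{n}-\sigma^{2}$. Completing the square exactly as in (\ref{eq: intermediate 1}) turns this into $W_{n}\le\left[\sqrt{\sigma^{2}}+\tfrac{1}{2}\sqrt{2\left\lfloor n/\left(2m\right)\right\rfloor^{-1}\log\left(1/\delta\right)}\right]^{2}+\tfrac{1}{6}\left\lfloor n/\left(2m\right)\right\rfloor^{-1}\log\left(1/\delta\right)$; taking square roots, applying $\sqrt{x+y}\le\sqrt{x}+\sqrt{y}$, and collecting the two coefficients into $\sqrt{2}/2+\sqrt{6}/6$ delivers (\ref{eq: sym bern1}).

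For (\ref{eq: sym bern2}) I would run the identical steps on the lower tail $\sigma^{2}-W_{n}$. After rearranging and completing the square, the residual constant becomes $\tfrac{7}{6}$ rather than $\tfrac{1}{6}$: in the upper-tail case the negative term produced by completing the square partially cancels the positive $D$-term, whereas in the lower-tail case both corrections move to the $W_{n}$-side of the inequality and add, so that one is left with $\left[\sqrt{\sigma^{2}}-\tfrac{1}{2}\sqrt{2\left\lfloor n/\left(2m\right)\right\rfloor^{-1}\log\left(1/\delta\right)}\right]^{2}\le W_{n}+\tfrac{7}{6}\left\lfloor n/\left(2m\right)\right\rfloor^{-1}\log\left(1/\delta\right)$. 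The square-root inequality then produces the coefficient $\sqrt{2}/2+\sqrt{42}/6$, since $\sqrt{7/6}=\sqrt{42}/6$. Finally, for the ``more generally'' clause I would note that $\tilde{\eta}$ is symmetric by construction, has the same range $\left[0,1/2\right]$ and mean $\mathbb{E}_{F}\tilde{\eta}=\sigma^{2}$, and satisfies the companion bound $\mathbb{V}_{F}\tilde{\eta}\le\sigma^{2}$; hence every step above transfers verbatim with $\left(\eta,W_{n}\right)$ replaced by $\left(\tilde{\eta},\tilde{W}_{n}\right)$.

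Because the argument is a transcription of the one already carried out for Proposition \ref{prop variance bernstein}, I do not expect a genuine obstacle. The single point requiring care is the variance--mean inequality $\mathbb{V}_{F}\eta\le\sigma^{2}$, which is precisely what licenses the use of $\sigma^{2}$ as a variance surrogate in (\ref{eq: one side bernstein}) and thereby makes the resulting interval empirical; the only place an arithmetic slip is likely is in tracking the two distinct residual constants $\tfrac{1}{6}$ and $\tfrac{7}{6}$ through the completing-the-square step.
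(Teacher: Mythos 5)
Your proposal is correct and follows essentially the same route as the paper, which itself proves this proposition by transcribing the derivation of Proposition \ref{prop variance bernstein} with $\eta$ (or $\tilde{\eta}$) in place of the variance kernel, $\left\lfloor n/(2m)\right\rfloor$ in place of $\left\lfloor n/2\right\rfloor$, and the variance--mean bounds $\mathbb{V}_{F}\eta\le\sigma^{2}$ and $\mathbb{V}_{F}\tilde{\eta}\le\sigma^{2}$ licensing the substitution into (\ref{eq: one side bernstein}). Your tracking of the residual constants $1/6$ and $7/6$ through the completing-the-square step, and hence of the coefficients $\sqrt{2}/2+\sqrt{6}/6$ and $\sqrt{2}/2+\sqrt{42}/6$, matches the paper's computation.
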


\subsection{Empirical confidence intervals}

Assume that $\bm{X}_{1},\dots,\bm{X}_{n}\in\mathbb{X}$ are IID and
$h\in\left[0,1\right]$. Via inequality (\ref{eq: one side bernstein})
and Lemma \ref{lem inequality reversal}, we have
\begin{equation}
U_{n}-\theta>\sqrt{\frac{2\sigma^{2}}{\left\lfloor n/m\right\rfloor }\log\frac{2}{\delta}}+\frac{4}{3\left\lfloor n/m\right\rfloor }\log\frac{2}{\delta}\text{,}\label{eq: delta over 2 union}
\end{equation}
with probability at most $\delta/2$. If $\eta$ is symmetric, then
Proposition \ref{prop Hoeffding bounds for the U-stat var} and Proposition
\ref{cor Hoeffding bounds for the U-stat asym} imply that
\begin{equation}
\sigma^{2}>W_{n}+\sqrt{\frac{1}{8\left\lfloor n/\left(2m\right)\right\rfloor }\log\frac{2}{\delta}}\text{,}\label{eq: hoeffding u stat delta on 2}
\end{equation}
with probability at most $\delta/2$. We may apply Lemma \ref{lem union bound}
along with the square root inequality in order to prove that
\begin{equation}
U_{n}-\theta\le\sqrt{\frac{2W_{n}}{\left\lfloor n/m\right\rfloor }\log\frac{2}{\delta}}+\sqrt{\frac{1}{\left\lfloor n/m\right\rfloor }\sqrt{\frac{1}{2\left\lfloor n/\left(2m\right)\right\rfloor }}\log^{3/2}\left(\frac{2}{\delta}\right)}+\frac{4}{3\left\lfloor n/m\right\rfloor }\log\frac{2}{\delta}\text{,}\label{eq: Hoeffding empirical bound}
\end{equation}
with probability at least $1-\delta$. Inequality (\ref{eq: Hoeffding empirical bound})
also holds with $U_{n}-\theta$ replaced by $\theta-U_{n}$. Furthermore,
(\ref{eq: Hoeffding empirical bound}) also holds with $W_{n}$ replaced
by $\tilde{W}_{n}$.

From Proposition \ref{prop Hoeffding bounds for the U-stat var},
we have
\[
\sqrt{\sigma^{2}}>\sqrt{W_{n}}+\left(\frac{\sqrt{2}}{2}+\frac{\sqrt{42}}{6}\right)\sqrt{\frac{1}{\left\lfloor n/\left(2m\right)\right\rfloor }\log\frac{2}{\delta}}\text{,}
\]
with probability at most $\delta/2$, for symmetric $\eta$. Using
Lemma \ref{lem union bound} in combination with (\ref{eq: delta over 2 union}),
we obtain the bound
\begin{equation}
U_{n}-\theta\le\sqrt{\frac{2W_{n}}{\left\lfloor n/m\right\rfloor }\log\frac{2}{\delta}}+\left(\frac{\sqrt{2}}{2}+\frac{\sqrt{42}}{6}\right)\sqrt{\frac{2}{\left\lfloor n/m\right\rfloor \left\lfloor n/\left(2m\right)\right\rfloor }}\log\frac{2}{\delta}+\frac{4}{3\left\lfloor n/m\right\rfloor }\log\frac{2}{\delta}\text{,}\label{eq: Bernstein empirical bound}
\end{equation}
with probability at least $1-\delta$. Generally, (\ref{eq: Bernstein empirical bound})
also holds when we replace $U_{n}-\theta$ by $\theta-U_{n}$ or $W_{n}$
by $\tilde{W}_{n}$, or both simultaneously. We summarize the results
of this section in the following theorem.
\begin{thm}
If $\bm{X}_{1},\dots,\bm{X}_{n}\in\mathbb{X}$ are IID, $h\in\left[0,1\right]$,
and $\eta$ is symmetric, then inequalities (\ref{eq: Hoeffding empirical bound})
and (\ref{eq: Bernstein empirical bound}) hold with probability at
least $1-\delta$. More generally, both (\ref{eq: Hoeffding empirical bound})
and (\ref{eq: Bernstein empirical bound}) hold with probability at
least $1-\delta$ when $U_{n}-\theta$ is replaced by $\theta-U_{n}$,
when $W_{n}$ is replaced by $\tilde{W}_{n}$, or when both quantities
are substituted, simultaneously.
\end{thm}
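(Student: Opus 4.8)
The plan is to combine the variance-dependent (but numerically inaccessible) Bernstein control of $U_n-\theta$ with the empirical variance-concentration results, stitching the two together through the transitive union bound of Lemma \ref{lem union bound}. All the ingredients have in fact already been assembled in the discussion preceding the statement, so the proof amounts to organizing the two failure events and chaining the resulting inequalities.

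First I would record the variance-dependent tail control on $U_n-\theta$. Applying inequality (\ref{eq: one side bernstein}) (with $c=2$, so that $2c/3=4/3$) together with Lemma \ref{lem inequality reversal} at confidence level $1-\delta/2$ --- that is, with $A=1$ and $\delta$ replaced by $\delta/2$, so $\log(1/\delta)$ becomes $\log(2/\delta)$ --- yields (\ref{eq: delta over 2 union}): the event that $U_n-\theta$ exceeds the intermediate quantity $Y:=\sqrt{2\sigma^{2}\log(2/\delta)/\lfloor n/m\rfloor}+(4/3)\lfloor n/m\rfloor^{-1}\log(2/\delta)$ has probability at most $\delta/2$. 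Next I would bound the unknown variance by its estimator: for the Hoeffding form I would invoke (\ref{eq: var hoeffding ci U-stat}) of Proposition \ref{prop Hoeffding bounds for the U-stat var} at level $\delta/2$ to obtain (\ref{eq: hoeffding u stat delta on 2}), while for the Bernstein form I would instead use the square-root bound (\ref{eq: sym bern2}) of Proposition \ref{prop symmetric bernstein variance} to control $\sqrt{\sigma^{2}}$ by $\sqrt{W_n}$.

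The core step is a single application of Lemma \ref{lem union bound} with $X=U_n-\theta$, the intermediate $Y$ above, and $Z$ equal to the empirical right-hand side of (\ref{eq: Hoeffding empirical bound}) (respectively (\ref{eq: Bernstein empirical bound})). Here $\Pr(X>Y)\le\delta/2$ is exactly (\ref{eq: delta over 2 union}), and $\Pr(Y>Z)\le\delta/2$ because $Y>Z$ forces the unknown variance to exceed its empirical upper bound, which is precisely the event controlled in the previous paragraph. Substituting the variance bound into $Y$ and applying the square-root inequality $\sqrt{x+y}\le\sqrt{x}+\sqrt{y}$ to peel off the $W_n$ term from the correction term reproduces $Z$ exactly, so Lemma \ref{lem union bound} gives $\Pr(U_n-\theta>Z)\le\delta$, which is the claim. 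I expect the only genuinely fiddly point to be the algebraic collapse of the nested radical in the Hoeffding case, where $\sqrt{2\lfloor n/m\rfloor^{-1}\log(2/\delta)\,\sqrt{(8\lfloor n/(2m)\rfloor)^{-1}\log(2/\delta)}}$ must be reduced to the stated $\sqrt{\lfloor n/m\rfloor^{-1}\sqrt{(2\lfloor n/(2m)\rfloor)^{-1}}\,\log^{3/2}(2/\delta)}$; this is routine but the powers of $\log(2/\delta)$ and the factor of $\sqrt{8}=2\sqrt{2}$ must be tracked carefully.

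Finally, the stated generalizations require no new work. Replacing $U_n-\theta$ by $\theta-U_n$ is permitted because (\ref{eq: one side bernstein}) and every variance-concentration bound invoked above hold verbatim with the sign of the deviation reversed, as noted when they were established. Replacing $W_n$ by $\tilde{W}_n$ is permitted by Proposition \ref{cor Hoeffding bounds for the U-stat asym} and the final clause of Proposition \ref{prop symmetric bernstein variance}, which furnish the identical variance bounds for the symmetrized kernel $\tilde{\eta}$. Since each substitution leaves both failure-probability estimates at $\delta/2$, the union-bound argument transfers unchanged to each case, and to the simultaneous substitution of both.
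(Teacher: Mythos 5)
Your proposal is correct and follows essentially the same route as the paper: the paper's own argument (given in the discussion preceding the theorem) likewise applies inequality (\ref{eq: one side bernstein}) with Lemma \ref{lem inequality reversal} at level $\delta/2$ to obtain (\ref{eq: delta over 2 union}), controls $\sigma^{2}$ by $W_{n}$ at level $\delta/2$ via Proposition \ref{prop Hoeffding bounds for the U-stat var} (respectively Proposition \ref{prop symmetric bernstein variance}), and chains the two through Lemma \ref{lem union bound} and the square-root inequality, with the generalizations to $\theta-U_{n}$ and $\tilde{W}_{n}$ handled exactly as you describe. Your bookkeeping of the nested radical, including the reduction of $2\sqrt{1/8}$ to $\sqrt{1/2}$ and the $\log^{3/2}(2/\delta)$ power, matches the stated form of (\ref{eq: Hoeffding empirical bound}).
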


\section{Empirical confidence intervals based on an improved Hoeffding inequality}

For independent random variables $X_{1},\dots,X_{n}\in\left[a,b\right]$,
the inequalities
\begin{equation}
\Pr\left(\bar{X}_{n}-\mathbb{E}\bar{X}_{n}\ge\epsilon\right)\le\exp\left(-\frac{2n\epsilon^{2}}{\left(b-a\right)^{2}}\right)\text{ and }\label{eq: classic hoeffding 1side}
\end{equation}
\begin{equation}
\Pr\left(\left|\bar{X}_{n}-\mathbb{E}\bar{X}_{n}\right|\ge\epsilon\right)\le2\exp\left(-\frac{2n\epsilon^{2}}{\left(b-a\right)^{2}}\right)\label{eq: classic hoeffding 2side}
\end{equation}
were proved, for any $\epsilon>0$, in \citet{Hoeffding:1963aa}.
In \citet{Bercu:2015aa}, an interesting improvement to the Hoeffding
inequality of form (\ref{eq: classic hoeffding 1side}) was reported.
We present the IID expectation form of the inequality below, and note
that the more general summation form appears as Theorem 2.47 in \citet{Bercu:2015aa}.
\begin{thm}
[Bercu et al., 2015] \label{thm bercu}If $X_{1},\dots,X_{n}\in\left[a,b\right]$
are IID random variables, then
\begin{equation}
\Pr\left(\bar{X}_{n}-\mathbb{E}_{F}X\ge\epsilon\right)\le\exp\left(-\frac{3n\epsilon^{2}}{\left(b-a\right)^{2}+2\mathbb{V}_{F}X}\right)\text{.}\label{eq: bercu 2015}
\end{equation}
Furthermore, (\ref{eq: bercu 2015}) also holds when $\bar{X}_{n}-\mathbb{E}_{F}X$
is replaced by $\mathbb{E}_{F}X-\bar{X}_{n}$.
\end{thm}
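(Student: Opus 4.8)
The plan is to prove (\ref{eq: bercu 2015}) by the exponential Chernoff method, reducing the tail bound for the sample mean to a single variance-sensitive bound on the moment generating function (MGF) of one centred observation. Write $\mu=\mathbb{E}_{F}X$ and $Y_{i}=X_{i}-\mu$, so that the $Y_{i}$ are IID, centred, take values in $\left[a-\mu,b-\mu\right]$ (an interval of length $b-a$), and satisfy $\mathbb{E}_{F}Y_{i}^{2}=\mathbb{V}_{F}X$. For any $\lambda>0$, Markov's inequality applied to $\exp\left(\lambda\sum_{i}Y_{i}\right)$ gives $\Pr\left(\bar{X}_{n}-\mu\ge\epsilon\right)\le\exp\left(-\lambda n\epsilon\right)\left(\mathbb{E}_{F}e^{\lambda Y_{1}}\right)^{n}$. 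Thus it suffices to establish the following variance-sensitive refinement of Hoeffding's lemma: for every $\lambda\ge0$,
\[
\log\mathbb{E}_{F}e^{\lambda Y_{1}}\le\frac{\lambda^{2}}{12}\left[\left(b-a\right)^{2}+2\,\mathbb{V}_{F}X\right]\text{.}
\]
Granting this, I would substitute it into the Chernoff bound, minimise $-\lambda\epsilon+\lambda^{2}\left[\left(b-a\right)^{2}+2\mathbb{V}_{F}X\right]/12$ over $\lambda\ge0$ (the minimiser being $\lambda=6\epsilon/\left[\left(b-a\right)^{2}+2\mathbb{V}_{F}X\right]$), and read off exactly the per-observation exponent that, after multiplication by $n$, yields $-3n\epsilon^{2}/\left[\left(b-a\right)^{2}+2\mathbb{V}_{F}X\right]$ as claimed.

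The substance of the proof is therefore the displayed MGF bound, and this is where I expect the real difficulty to lie. I would work with the cumulant generating function $L\left(\lambda\right)=\log\mathbb{E}_{F}e^{\lambda Y_{1}}$, which satisfies $L\left(0\right)=0$, $L'\left(0\right)=\mathbb{E}_{F}Y_{1}=0$, and $L''\left(\lambda\right)=\mathbb{V}_{\lambda}\left(Y_{1}\right)$, the variance of $Y_{1}$ under the exponentially tilted law with density proportional to $e^{\lambda y}$. A purely range-based bound $L''\left(\lambda\right)\le\left(b-a\right)^{2}/4$, integrated twice against $L\left(0\right)=L'\left(0\right)=0$, recovers only the classical constant $\left(b-a\right)^{2}/8$ and makes no use of $\mathbb{V}_{F}X$. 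The refinement requires controlling the tilted variance so as to interpolate between its value $\mathbb{V}_{F}X$ at $\lambda=0$ and the extremal range-based value as $\lambda$ grows; the delicate point is to combine these two pieces of information so that, after the double integration, they aggregate to precisely the coefficient $\left[\left(b-a\right)^{2}+2\mathbb{V}_{F}X\right]/6$. A useful sanity check that I would keep in view is that, at the maximal admissible variance $\mathbb{V}_{F}X=\left(b-a\right)^{2}/4$, this bracket collapses to $\left(b-a\right)^{2}/4$ and the MGF bound reduces exactly to the classical Hoeffding lemma $\log\cosh\left(\lambda t\right)\le\left(\lambda t\right)^{2}/2$, so the refinement is tight at the boundary and strictly sharper in the interior.

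Since the paper already identifies this MGF estimate with the general summation form of Theorem 2.47 of \citet{Bercu:2015aa}, an efficient alternative is to bypass the direct derivation and instead specialise that cited inequality to the IID empirical mean $\bar{X}_{n}$, which delivers (\ref{eq: bercu 2015}) immediately; either route then concludes identically. Finally, the reversed inequality, with $\bar{X}_{n}-\mathbb{E}_{F}X$ replaced by $\mathbb{E}_{F}X-\bar{X}_{n}$, follows by applying the one-sided bound to the variables $-X_{1},\dots,-X_{n}$: these lie in $\left[-b,-a\right]$, an interval of the same length $b-a$, have the same variance $\mathbb{V}_{F}X$, and mean $-\mu$, so the one-sided tail for their sample mean is precisely the reversed tail for the original sample, with an unchanged right-hand side.
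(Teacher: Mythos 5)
The paper does not actually prove this theorem: it imports it verbatim as the IID specialisation of Theorem 2.47 of \citet{Bercu:2015aa}. Your ``efficient alternative'' of simply specialising the cited result to the empirical mean is therefore exactly what the paper does, and that route is complete; your reversal argument (applying the one-sided bound to $-X_{1},\dots,-X_{n}$, which preserves the interval length and the variance) is also correct and is the standard justification for the second claim.

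Your primary, self-contained route, however, has a genuine gap precisely where you anticipate it. The Chernoff reduction, the identification of the required moment generating function bound $\log\mathbb{E}_{F}e^{\lambda Y_{1}}\le\lambda^{2}\left[(b-a)^{2}+2\mathbb{V}_{F}X\right]/12$, the optimisation at $\lambda=6\epsilon/\left[(b-a)^{2}+2\mathbb{V}_{F}X\right]$, and the boundary check at $\mathbb{V}_{F}X=(b-a)^{2}/4$ all check out arithmetically. But that MGF bound \emph{is} the theorem: everything else is routine. Controlling the tilted variance $L''(\lambda)$ so that the two pieces of information (initial variance $\mathbb{V}_{F}X$ and range $b-a$) integrate to the coefficient $\left[(b-a)^{2}+2\mathbb{V}_{F}X\right]/6$ is not a simple interpolation; the argument in \citet{Bercu:2015aa} rests on a non-obvious extremal comparison showing that, among laws on $[a,b]$ with prescribed mean and variance, a two-point law maximises the relevant functional, after which the cumulant generating function of that two-point law must still be bounded by the stated quadratic. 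You describe the shape of this difficulty accurately but do not resolve it, so as a self-contained proof the proposal does not close; it is acceptable only insofar as it falls back on the citation, which is also all the paper itself offers.
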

Without loss of generality, suppose that $\left[a,b\right]=\left[0,1\right]$.
Then, we obtain
\[
\Pr\left(\bar{X}_{n}-\mathbb{E}_{F}X\ge\epsilon\right)\le\exp\left(-\frac{3n\epsilon^{2}}{1+2\mathbb{V}_{F}X}\right)
\]
and, with probability at least $1-\delta$,

\[
\bar{X}_{n}-\mathbb{E}_{F}X\le\sqrt{\frac{1+2\mathbb{V}_{F}X}{3n}\log\frac{1}{\delta}}\text{,}
\]
via Lemma \ref{lem inequality reversal}. Thus, with probability at
most $\delta/2$,
\begin{equation}
\bar{X}_{n}-\mathbb{E}_{F}X>\sqrt{\frac{1+2\mathbb{V}_{F}X}{3n}\log\frac{2}{\delta}}\text{.}\label{eq: improved hoeffding delta on 2}
\end{equation}
Similar to (\ref{eq: hoeffding u stat delta on 2}), we have
\begin{equation}
\mathbb{V}_{F}X>S_{n}^{2}+\sqrt{\frac{1}{8\left\lfloor n/2\right\rfloor }\log\frac{2}{\delta}}\label{eq: hoeffding var delta on 2}
\end{equation}
with probability at most $\delta/2$, via (\ref{eq: var hoeffding ci 1}).

Combining (\ref{eq: improved hoeffding delta on 2}) and (\ref{eq: hoeffding var delta on 2})
via Lemma \ref{lem union bound} and the square root inequality then
yields
\begin{equation}
\bar{X}_{n}-\mathbb{E}_{F}X\le\sqrt{\frac{1+2S_{n}^{2}}{3n}\log\frac{2}{\delta}}+\sqrt{\frac{1}{12n}\sqrt{\frac{8}{\left\lfloor n/2\right\rfloor }}\log^{3/2}\left(\frac{2}{\delta}\right)}\text{,}\label{eq: empirical improved hoeffding-hoeffding}
\end{equation}
with probability at least $1-\delta$. Given the symmetry of (\ref{eq: bercu 2015}),
we may also switch $\bar{X}_{n}-\mathbb{E}_{F}X$ with $\mathbb{E}_{F}X-\bar{X}_{n}$
in (\ref{eq: empirical improved hoeffding-hoeffding}).

Next, (\ref{eq: Prop 2 2}) implies that
\begin{equation}
\sqrt{\mathbb{V}_{F}X}>\sqrt{S_{n}^{2}}+\left(\frac{\sqrt{2}}{2}+\frac{\sqrt{42}}{6}\right)\sqrt{\frac{1}{\left\lfloor n/2\right\rfloor }\log\frac{2}{\delta}}\label{eq: bernstein var delta on 2}
\end{equation}
with probability at most $\delta/2$. Combining with (\ref{eq: bernstein var delta on 2})
via Lemma \ref{lem union bound} and the square root inequality then
yields

\begin{equation}
\bar{X}_{n}-\mathbb{E}_{F}X\le\sqrt{\frac{1}{3n}\log\frac{2}{\delta}}+\sqrt{\frac{2S_{n}^{2}}{3n}\log\frac{2}{\delta}}+\left(\frac{\sqrt{2}}{2}+\frac{\sqrt{42}}{6}\right)\sqrt{\frac{2}{3\left\lfloor n/2\right\rfloor n}}\log\frac{2}{\delta}\text{,}\label{eq: empirical improved hoeffding-bernstein}
\end{equation}
with probability at least $1-\delta$. Again, the inequality (\ref{eq: empirical improved hoeffding-bernstein})
holds if we switch $\bar{X}_{n}-\mathbb{E}_{F}X$ with $\mathbb{E}_{F}X-\bar{X}_{n}$.

Finally, note that we can obtain two-sided versions of (\ref{eq: empirical improved hoeffding-hoeffding})
and (\ref{eq: empirical improved hoeffding-bernstein}) by considering
the union of lower bounds on $\bar{X}_{n}-\mathbb{E}_{F}X$, $\mathbb{E}_{F}X-\bar{X}_{n}$,
and $\sqrt{\mathbb{V}_{F}X}$, simultaneously. We then obtain, with
probability at least $1-\delta$,
\[
\left|\bar{X}_{n}-\mathbb{E}_{F}X\right|\le\sqrt{\frac{1+2S_{n}^{2}}{3n}\log\frac{4}{\delta}}+\sqrt{\frac{1}{12n}\sqrt{\frac{8}{\left\lfloor n/2\right\rfloor }}\log^{3/2}\left(\frac{3}{\delta}\right)}
\]
and
\[
\left|\bar{X}_{n}-\mathbb{E}_{F}X\right|\le\sqrt{\frac{1}{3n}\log\frac{3}{\delta}}+\sqrt{\frac{2S_{n}^{2}}{3n}\log\frac{3}{\delta}}+\left(\frac{\sqrt{2}}{2}+\frac{\sqrt{42}}{6}\right)\sqrt{\frac{2}{3\left\lfloor n/2\right\rfloor n}}\log\frac{3}{\delta}\text{.}
\]

\section{Concluding remarks}
\begin{rem}
Inequalities (\ref{eq: one side bernstein}), (\ref{eq: two side bernstein}),
(\ref{eq: classic bernstein one-sided}), and (\ref{eq: classic bernstein two-sided})
are often stated with either the additional assumption that $\mathbb{E}_{F}h=0$
or that $\mathbb{E}\bar{X}_{n}=0$ (see, e.g., \citealp[Prop. 2.3]{Arcones:1993aa},
\citet{Arcones:1995aa}, and \citealp[Thm. 2.28]{Bercu:2015aa}).
As such, the constant $c$ is usually defined as $\max\left\{ \left|a\right|,\left|b\right|\right\} $.
However we define $c=2\max\left\{ \left|a\right|,\left|b\right|\right\} $,
since we allow for cases where $\mathbb{E}_{F}h\ne0$ or $\mathbb{E}\bar{X}_{n}\ne0$,
and due to the fact that $\left|X-\mathbb{E}X\right|\le\left|X\right|+\left|\mathbb{E}X\right|\le2\max\left\{ \left|a\right|,\left|b\right|\right\} $,
for any $X\in\left[a,b\right]$.
\end{rem}
\begin{rem}
To the best of our knowledge, Propositions \ref{prop Hoeffding bounds for the variance}
and \ref{prop Hoeffding bounds for the U-stat var} and Proposition
\ref{cor Hoeffding bounds for the U-stat asym} do not appear elsewhere
in the literature. Proposition \ref{prop variance bernstein} is a
special case of Proposition \ref{prop symmetric bernstein variance},
where we use the order one kernel $h=x$. Proposition \ref{prop variance bernstein}
can be viewed as a refinement of intermediate results from the proof
of \citet[Thm. 3]{Peel:2010aa}. Our refinements are as follows: we
correctly differentiated the cases where $\eta$is symmetric or where
we must replace it by $\tilde{\eta}$; we constructed our CIs using
the inequality (\ref{eq: one side bernstein}) instead of the two-sided
version (\ref{eq: two side bernstein}), and thus obtained better
constants; we obtained one-sided CIs for both $\sqrt{W_{n}}-\sqrt{\sigma^{2}}$
and $\sqrt{\sigma^{2}}-\sqrt{W_{n}}$ (and when $W_{n}$ is replaced
by $\tilde{W}_{n}$, whereas \citet{Peel:2010aa} only considered
the CI for $\sqrt{\sigma^{2}}-\sqrt{W_{n}}$; and lastly, we make
no assumptions on the divisibility of $n$, whereas \citet{Peel:2010aa}
assumes that $n$ is divisible by both 2 and $m$ in their presentation.
\end{rem}
\begin{rem}
We may compare Proposition \ref{prop variance bernstein} directly
to Theorem 10 of \citet{Maurer:2009aa}, which cannot be obtained
within the U-statistics framework. Under the same conditions as Proposition
\ref{prop variance bernstein}, \citet[Thm. 10]{Maurer:2009aa} states
that, with probability at least $1-\delta$,
\begin{equation}
\sqrt{S_{n}^{2}}\le\sqrt{\mathbb{V}_{F}X}+\sqrt{\frac{2}{n-1}\log\frac{1}{\delta}}\text{ and}\label{eq: Maurer 1}
\end{equation}
\begin{equation}
\sqrt{\mathbb{V}_{F}X}\le\sqrt{S_{n}^{2}}+\sqrt{\frac{2}{n-1}\log\frac{1}{\delta}}\text{.}\label{eq: Maurer 2}
\end{equation}
It is easy to see that the CIs of \citet[Thm. 10]{Maurer:2009aa}
achieve the same rates with respect to $n$ and $\delta$ as Proposition
\ref{prop variance bernstein}. Furthermore, for large $n$, (\ref{eq: Maurer 1})
and (\ref{eq: Maurer 2}) provide tighter bounds than (\ref{eq: Prop 2 1})
and (\ref{eq: Prop 2 2}), respectively, for almost all values of
$n$. We note that the only exception is when $n=2$ or $4$, where
(\ref{eq: Prop 2 1}) is tighter than (\ref{eq: Maurer 1}).
\end{rem}
\begin{rem}
By considering the lower bounds of $U_{n}-\theta$, $\theta-U_{n}$
and $\sqrt{\sigma^{2}}$, simultaneously, we may obtain the two-sided
version of (\ref{eq: Bernstein empirical bound}):
\begin{equation}
\left|U_{n}-\theta\right|\le\sqrt{\frac{2W_{n}}{\left\lfloor n/m\right\rfloor }\log\frac{3}{\delta}}+\left(\frac{\sqrt{2}}{2}+\frac{\sqrt{42}}{6}\right)\sqrt{\frac{2}{\left\lfloor n/m\right\rfloor \left\lfloor n/\left(2m\right)\right\rfloor }}\log\frac{3}{\delta}+\frac{4}{3\left\lfloor n/m\right\rfloor }\log\frac{3}{\delta}\text{,}\label{eq: twosided bernstein Ustat}
\end{equation}
with probability at least $1-\delta$. We may compare this directly
with \citet[Thm. 3]{Peel:2010aa}, which for symmetric $\eta$ and
under the assumption that $n$ is divisible by $2$ and $m$, implies
that
\[
\left|U_{n}-\theta\right|\le\sqrt{\frac{2mW_{n}}{n}\log\frac{4}{\delta}}+\frac{5m}{n}\log\frac{4}{\delta}\text{,}
\]
with probability at least $1-\delta$. Under the same assumptions
regarding the divisibility of $n$, we can write (\ref{eq: twosided bernstein Ustat})
as
\[
\left|U_{n}-\theta\right|\le\sqrt{\frac{2mW_{n}}{n}\log\frac{3}{\delta}}+\frac{\left(4+\sqrt{2}\left[3+\sqrt{21}\right]\right)m}{3n}\log\frac{3}{\delta}\text{,}
\]
where $\left(4+\sqrt{2}\left[3+\sqrt{21}\right]\right)/3\le4.908$.
Thus, (\ref{eq: twosided bernstein Ustat}) is tighter than the CI
of \citet[Thm. 3]{Peel:2010aa}.
\end{rem}
\begin{rem}
As noted in \citet[Sec. 2.5.4]{Bercu:2015aa}, (\ref{eq: bercu 2015})
is a strict improvement of (\ref{eq: classic hoeffding 1side}) since
$\left(b-a\right)^{2}\ge4\mathbb{V}_{F}X$. Furthermore, it is provable
that $\left(b-a\right)^{2}>4\mathbb{V}_{F}X$ for all random variables
$X\in\left[a,b\right]$, except for binary $X\in\left\{ a,b\right\} $,
where $\Pr\left(X=a\right)=\Pr\left(X=b\right)=1/2$.
\end{rem}
\begin{rem}
CIs (\ref{eq: empirical improved hoeffding-hoeffding}) and (\ref{eq: empirical improved hoeffding-bernstein})
may be compared directly to the empirical Bernstein-type inequalities
of \citet{Audibert:2009aa} and and \citet{Maurer:2009aa}. Under
the same conditions as those under which (\ref{eq: empirical improved hoeffding-hoeffding})
and (\ref{eq: empirical improved hoeffding-bernstein}) are established,
with probability at least $1-\delta$, we have
\begin{equation}
\bar{X}_{n}-\mathbb{E}_{F}X\le\sqrt{\frac{2\left(n-1\right)S_{n}^{2}}{n^{2}}\log\left(\frac{2}{\delta}\right)}+\frac{3}{n}\log\left(\frac{2}{\delta}\right)\text{ and}\label{eq: audibert bennett}
\end{equation}
\begin{equation}
\bar{X}_{n}-\mathbb{E}_{F}X\le\sqrt{\frac{2S_{n}^{2}}{n}\log\left(\frac{2}{\delta}\right)}+\frac{7}{3\left(n-1\right)}\log\left(\frac{2}{\delta}\right)\text{,}\label{eq: maurer bennett}
\end{equation}
via \citet[Thm. 1]{Audibert:2009aa} and \citet[Thm. 4]{Maurer:2009aa},
respectively. A visual comparison of the logarithms of the RHSs of
(\ref{eq: empirical improved hoeffding-hoeffding}), (\ref{eq: empirical improved hoeffding-bernstein}),
(\ref{eq: audibert bennett}), and (\ref{eq: maurer bennett}) is
provided in Figure \ref{fig: AudibertMaurer}. We compare the four
bounds for $S_{n}^{2}\in\left\{ 0.05,0.25\right\} $ and $\delta\in\left\{ 0.01,0.1\right\} $.
It is observable that (\ref{eq: empirical improved hoeffding-hoeffding})
was uniformly tighter than (\ref{eq: empirical improved hoeffding-bernstein}).
For smaller values of $S_{n}^{2}$, (\ref{eq: audibert bennett})
and (\ref{eq: maurer bennett}) were tighter than (\ref{eq: empirical improved hoeffding-hoeffding}),
for larger $n$. For larger $S_{n}^{2}$, (\ref{eq: empirical improved hoeffding-hoeffding})
was tighter than (\ref{eq: audibert bennett}) and (\ref{eq: maurer bennett})
over a middle range of $n$, however (\ref{eq: empirical improved hoeffding-bernstein})
remained uncompetitive. Changing $\delta$ did not tend to alter the
relative performance of the bounds. As noted by \citet{Maurer:2009aa},
(\ref{eq: maurer bennett}) has better constants than (\ref{eq: audibert bennett})
and thus provided tighter bounds for larger values of $n$. We finally
note that (\ref{eq: empirical improved hoeffding-bernstein}) can
be improved by using (\ref{eq: Maurer 2}) in the place of (\ref{eq: Prop 2 2})
in its derivation. This was not pursued because we wished for the
derivation to be self-contained within the U-statistics framework.

\begin{figure}
\begin{centering}
\includegraphics[width=16cm]{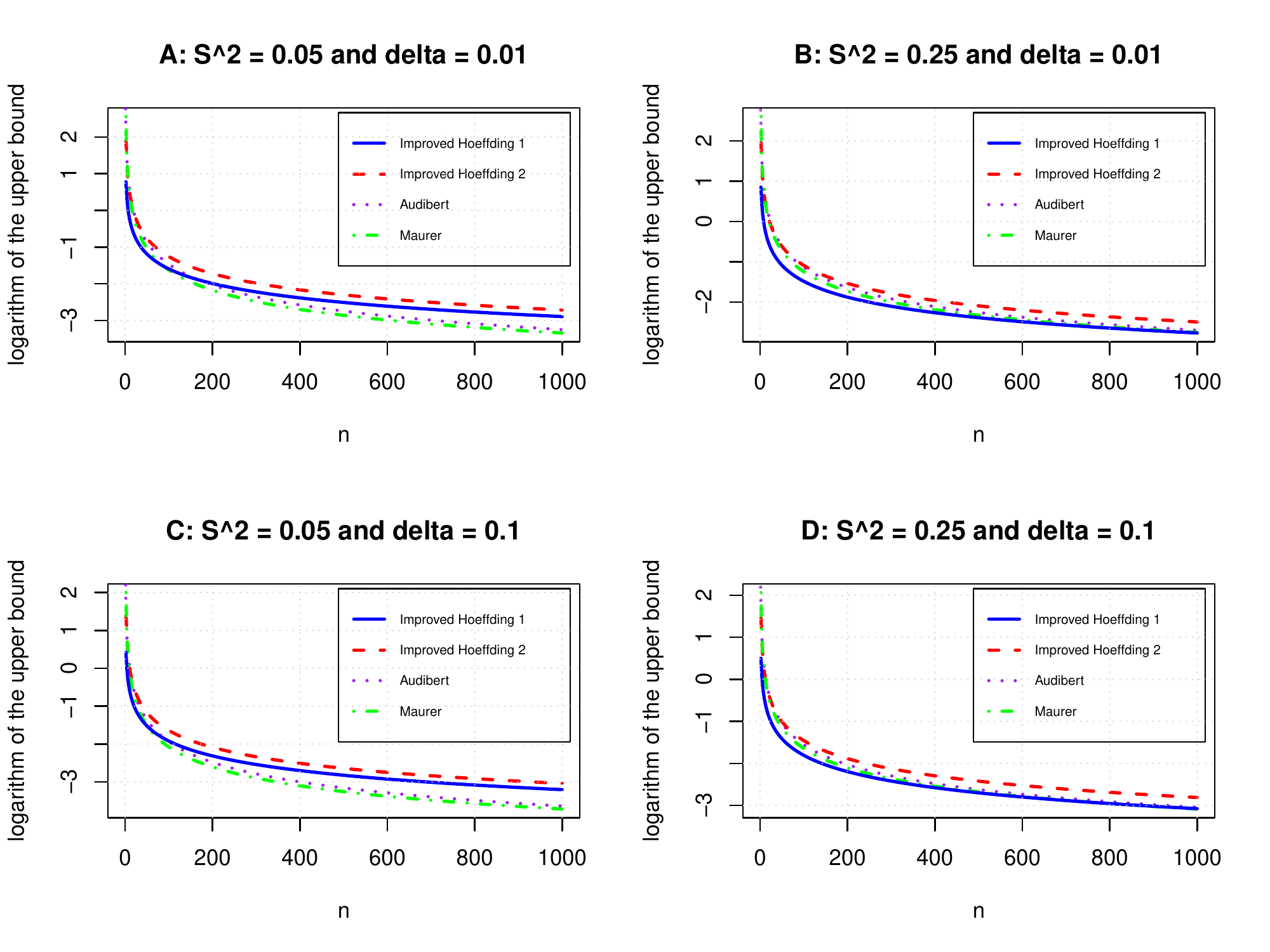}
\par\end{centering}
\caption{\label{fig: AudibertMaurer} Logarithms of the upper bounds of the
CIs (\ref{eq: empirical improved hoeffding-hoeffding}), (\ref{eq: empirical improved hoeffding-bernstein}),
(\ref{eq: audibert bennett}), and (\ref{eq: maurer bennett}), as
a function of $n$, for $S_{n}^{2}$ and $\delta$ set at various
levels. Each subplot A--D visualizes the relative performances of
the four bounds, for the values $S_{n}^{2}$ and $\delta$ that are
displayed in the title. The labels Improved Hoeffding 1 and 2 correspond
to (\ref{eq: empirical improved hoeffding-hoeffding}) and (\ref{eq: empirical improved hoeffding-bernstein}),
respectively, whereas Audibert and Maurer correspond to (\ref{eq: audibert bennett})
and (\ref{eq: maurer bennett}), respectively.}

\end{figure}
\end{rem}
\begin{rem}
For $k,n\in\mathbb{N}$, such that $k<n$, we have $\left\lfloor n/k\right\rfloor >\left(n-k+1\right)/k$.
Thus, we may remove the floor operator in each of the inequalities
where it appears by upper bounding its multiplicative inverse. For
instance,
\[
\sqrt{\frac{2mW_{n}}{n-m+1}\log\frac{3}{\delta}}+\left(\frac{\sqrt{2}}{2}+\frac{\sqrt{42}}{6}\right)\sqrt{\frac{4m^{2}}{\left(n-m+1\right)\left(n-2m+1\right)}}\log\frac{3}{\delta}+\frac{4m}{3\left(n-m+1\right)}\log\frac{3}{\delta}
\]
is an upper bound for the RHS of (\ref{eq: twosided bernstein Ustat}).
\end{rem}

\section*{Acknowledgements}

The author is funded by Australian Research Council grants DE170101134
and DP180101192.

\bibliographystyle{apalike2}
\bibliography{20190210_Ustat}

\end{document}